\documentclass[11pt]{article}

\usepackage{float}
\usepackage{multirow}
\usepackage{amsmath,amsfonts,amssymb,graphicx,amsthm,color,natbib}
\usepackage{authblk}
\allowdisplaybreaks
\usepackage[]{geometry}
\usepackage[normalem]{ulem}
\usepackage{hyperref}

\newtheorem{proposition}{Proposition}%
\newtheorem{corollary}{Corollary}

\begin{document}
	
	\title{A family of consistent normally distributed tests for Poissonity}

\author[1]{{Antonio} {Di Noia}}
\author[1]{{Marzia} {Marcheselli}}
\author[1]{{Caterina} {Pisani}}
\author[2]{{Luca} {Pratelli}}
\affil[1]{Department of Economics and Statistics, University of Siena}
\affil[2]{Italian Naval Academy}
\date{}
\setcounter{Maxaffil}{0}
\renewcommand\Affilfont{\itshape\small}
\maketitle
\let\thefootnote\relax\footnotetext{\emph{Email addresses:} antonio.dinoia55@gmail.com (Antonio Di Noia), marzia.marcheselli(\sout{at})unisi.it (Marzia Marcheselli), caterina.pisani(\sout{at})unisi.it (Caterina Pisani), luca\_pratelli(\sout{at})marina.difesa.it (Luca Pratelli).}

\begin{abstract}
A {family of} consistent tests, {derived from} a characterization of the probability generating function, is proposed for assessing Poissonity against a wide class of count distributions, which includes some of the most frequently adopted alternatives to the Poisson distribution. {Actually, the family of test statistics is based on the difference between the plug-in estimator of the Poisson cumulative distribution function and the empirical cumulative distribution function. The test statistics have an intuitive and simple form and are} asymptotically normally distributed, allowing a straightforward implementation of the test. The finite sample properties of the test are investigated by means of an extensive simulation study. The test shows satisfactory behaviour compared to other tests with known limit distribution.
\end{abstract}

\noindent {\bf Keywords:}
asymptotic normality, consistent test, Poissonity, probability generating function.

\maketitle

\section{Introduction}
Assessing the Poissonity assumption is a relevant issue of statistical inference, both because Poisson distribution has an impressive list of applications in biology, epidemiology, physics, and queue theory (see e.g. \citealp{Johnson}; \citealp{PuigWei}) and because it is a preliminary step in order to apply many popular statistical models. 
The use of the probability generating function (p.g.f.) has a long tradition (see e.g. \citealp{Kocherlakota}; \citealp{MeintanisBassiakos}; \citealp{Remillard}) for testing discrete distributions and some omnibus procedures based on the p.g.f. have been proposed for Poissonity (e.g. \citealp{Nakamura}; \citealp{Baringhaus}; \citealp{Rueda}; \citealp{Gurtler}; \citealp{Meintanis}; \citealp{Inglot}; \citealp{PuigWei}). 
Omnibus tests are particularly appealing since they are consistent against all possible alternative distributions but they commonly have a non-trivial asymptotic behavior. Moreover, the distribution of the test statistic may depend on the unknown value of the Poisson parameter, implying the necessity to use computationally intensive bootstrap, jackknife, or other resampling methods to approximate it. 
On the other hand, Poissonity tests against specific alternatives may achieve high power but rely on the knowledge of what deviations from Poissonity can occur.
An alternative approach, proposed by \cite{Meintanis}, is to consider tests with suitable asymptotic properties with respect to a fairly wide class of alternatives, which are also the most likely when dealing with the Poissonity assumption.

In this paper, by referring to the same class of alternative distributions and by using the characterization of the Poisson distribution based on its p.g.f., we propose a family of consistent and asymptotically normally distributed test statistics{, based on the difference between the plug-in estimator of the Poisson cumulative distribution function (c.d.f.) and the empirical c.d.f.,}  and a data-driven procedure for the choice of the parameter indexing the statistics. In particular, the test statistics not only have an intuitive interpretation but, being simple to compute, allow a straightforward implementation of the test and lead to test procedures with satisfactory performance also in presence of contiguous alternatives.

\section{Characterization of the Poisson distribution}
Let $X$ be a random variable (r.v.) taking natural values with probability mass function $p_X$ and ${\rm E}[X]=\mu$. Moreover, let $\Psi_{X}(t)={\rm E}[t^X]$, with $t\in[0,1]$, be the p.g.f. of $X$. Following \cite{Meintanis}, we consider the class of count distributions $\Delta$ such that 
\begin{equation*}
	\label{diffeq}
	D(t,\mu)=\Psi_{X}^\prime(t)-\mu\Psi_{X}(t)
\end{equation*} 
is not negative for any $t\in[0,1]$ or not positive for any $t\in[0,1]$ for all $\mu>0$, where $\Psi_{X}^\prime(t)$ is the first order derivative of the p.g.f.. 

{As proven by \cite{Meintanis},} this class contains many popular alternatives to the Poisson distribution, such as the Binomial distribution, the Negative Binomial distribution, the generalized Hermite distribution, the Zero-Inflated {and generalized Poisson distribution, among others}.

It must be pointed out that $D(t,\mu)=0$ for any $t\in[0,1]$ and for some $\mu>0$ if and only if $X$ is a Poisson r.v.. This characterization allows to construct a goodness of fit test for Poissonity against alternatives belonging to the class $\Delta$, that is for the hypothesis system 
\begin{equation*}
	\begin{aligned}
		H_0&: X \in \Pi_\mu, \text{for some}\ \mu>0 \cr
		H_1&: X \in \Delta 
	\end{aligned}
\end{equation*}
where $\Pi_\mu$ denotes the Poisson distribution with parameter $\mu$. 
In particular, \cite{Meintanis} adopt the previous characterization to construct a consistent test for the Poisson distribution by means of the empirical counterpart of $D(t,\mu)$ suitably weighted.
An alternative approach can be based on the $L^1$ distance of $D(t,\mu)$ from $0$, whose positive values evidence departures from Poissonity. The following Proposition, giving bounds for this distance, provides insight into the introduction of a family of test statistics. 

\begin{proposition} For any natural number $k$, let $f_k(\mu)=e^{-\mu}(1+\ldots+{\frac {\mu^k}{ k!}}).$ For any $X \in \Delta$ and for any $\mu>0$, it holds  
	\begin{equation}\label{Eq 1} 
		\begin{aligned}
			\frac{1}{1+\ldots+{\frac {\mu^k}{ k!}}}\vert  T^{(k)}\vert \leq\int_0^1 \vert D(t,\mu)\vert\, dt \leq &{e^{\mu}}\vert T^{(k)}\vert,
		\end{aligned}
	\end{equation}
	where $T^{(k)}=f_k(\mu)-p_X(0)(1+\ldots+{\frac{\mu^k}{ k!}}).$ 	
	In particular, for $k=0$,
	\begin{equation*}\label{Eq 2} 
		\begin{aligned}
			\vert e^{-\mu}-p_X(0)\vert\leq\int_0^1 \vert D(t,\mu)\vert\, dt \leq &e^\mu\,\vert e^{-\mu}-p_X(0)\vert.
		\end{aligned}
	\end{equation*}
\end{proposition}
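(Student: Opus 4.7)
My plan is to reduce everything to the case $k=0$ by first establishing the tight identity behind that case, and then observing that $T^{(k)}$ is nothing but a scalar multiple of the quantity $e^{-\mu}-p_X(0)$. The starting observation is the product-rule identity
\begin{equation*}
\frac{d}{dt}\bigl[e^{-\mu t}\Psi_X(t)\bigr]=e^{-\mu t}\bigl(\Psi_X'(t)-\mu\Psi_X(t)\bigr)=e^{-\mu t}D(t,\mu),
\end{equation*}
so that integrating on $[0,1]$ and using $\Psi_X(1)=1$, $\Psi_X(0)=p_X(0)$ gives
\begin{equation*}
\int_0^1 e^{-\mu t}D(t,\mu)\,dt=e^{-\mu}-p_X(0).
\end{equation*}

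Next I would invoke the defining property of the class $\Delta$: since $D(t,\mu)$ keeps a constant sign on $[0,1]$, the absolute value can be taken inside the integral, yielding the exact identity
\begin{equation*}
\int_0^1 e^{-\mu t}\bigl|D(t,\mu)\bigr|\,dt=\bigl|e^{-\mu}-p_X(0)\bigr|.
\end{equation*}
Sandwiching the weight by $e^{-\mu}\le e^{-\mu t}\le 1$ for $t\in[0,1]$ then immediately gives the $k=0$ bounds
\begin{equation*}
\bigl|e^{-\mu}-p_X(0)\bigr|\le \int_0^1\bigl|D(t,\mu)\bigr|\,dt\le e^{\mu}\bigl|e^{-\mu}-p_X(0)\bigr|.
\end{equation*}

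To upgrade to arbitrary $k$, the key algebraic remark is that, writing $P_k(\mu)=1+\mu+\cdots+\frac{\mu^k}{k!}$,
\begin{equation*}
T^{(k)}=f_k(\mu)-p_X(0)P_k(\mu)=\bigl(e^{-\mu}-p_X(0)\bigr)P_k(\mu),
\end{equation*}
so that $|e^{-\mu}-p_X(0)|=|T^{(k)}|/P_k(\mu)$. Substituting into the displayed inequality above yields the lower bound of \eqref{Eq 1} exactly; for the upper bound one further uses $P_k(\mu)\ge 1$ to relax $e^{\mu}|T^{(k)}|/P_k(\mu)$ to $e^{\mu}|T^{(k)}|$, giving the stated form.

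The work is essentially contained in spotting the integrating factor $e^{-\mu t}$ and in the factorisation of $T^{(k)}$; nothing else is delicate, and the constant-sign hypothesis is the only place where membership in $\Delta$ is used. There is no real obstacle — the mild looseness in the upper bound (a factor $P_k(\mu)\ge 1$) is inherited from the last step and is presumably accepted because the family is introduced precisely to index different empirical counterparts $T^{(k)}$, rather than to sharpen the inequality.
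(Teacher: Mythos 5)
Your proof is correct and follows essentially the same route as the paper: the integrating-factor identity $\{e^{-\mu t}\Psi_X(t)\}'=e^{-\mu t}D(t,\mu)$, the constant-sign property of $D$ on $[0,1]$ to pull the absolute value inside, the bounds $e^{-\mu}\le e^{-\mu t}\le 1$, and finally the factorisation $T^{(k)}=(e^{-\mu}-p_X(0))(1+\cdots+\tfrac{\mu^k}{k!})$ (the paper's ``dividing and multiplying'' step). Your observation that the upper bound loses a factor $1+\cdots+\tfrac{\mu^k}{k!}\ge 1$ is also accurate.
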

\begin{proof}
	Since $X \in \Delta$ 
	\begin{equation*}
		\begin{aligned}
			\big\vert\int_0^1 D(t,\mu)e^{-\mu t}\, dt\big\vert &\leq \int_0^1 \vert D(t,\mu)\vert\, dt 
			=\int_0^1 e^{\mu t}\vert D(t,\mu)e^{-\mu t}\vert \, dt\\ &\leq e^\mu\int_0^1 \vert D(t,\mu)e^{-\mu t}\vert \, dt=
			e^\mu\big\vert\int_0^1 D(t,\mu)e^{-\mu t}\, dt\big\vert.
		\end{aligned}
	\end{equation*}
	As $D(t,\mu)e^{-\mu t}=\{\Psi_{X}(t)e^{-\mu t}\}^\prime$, then
	\begin{equation*}
		\begin{aligned}
			\int_0^1 D(t,\mu)e^{-\mu t}\, dt = e^{-\mu }-p_X(0).
		\end{aligned}
	\end{equation*}
	By dividing and multiplying for $1+\ldots+{\frac{\mu^k}{ k!}}$, the thesis immediately follows.
\end{proof}

\medskip\noindent Thanks to inequality \eqref{Eq 1}, a family of test statistics, indexed by $k$ and depending on an estimator of 
$$T^{ (k)}=f_k(\mu)-p_X(0)\Big(1+\ldots+{{\mu^k}\over{ k!}}\Big)$$ can be defined. Note that $f_k(\mu)$ and $p_X(0)(1+\ldots+{\frac{\mu^k}{ k!}})$ are equal to $P(X\leq k)$ when $X$ belongs to $\Pi_\mu$ and obviously  
$T^{(k)}\neq 0$ for some $k\geq 0$ iff $X$ is not a Poisson r.v. iff $T^{(0)}\neq 0$.

\section{The test statistic}

Given a random sample $X_1,\ldots,X_n$ from $X$, let $\overline X_n$ 
be the sample mean {and 
	$$\widehat p_X(0)=\frac{I{(X_1= 0)}+\ldots+I{(X_n= 0)}}{n}.$$}
\noindent The simplest test statistic arises from the estimator  {$$\widehat T^{ (0)}=e^{-\overline X_n }-\widehat p_X(0)$$ of $T^{ (0)}$. This statistic is really appealing also owing to its straightforward interpretation, being based on the comparison of}  the probability that $X$ takes value zero with the probability of zero for a Poisson r.v. {Unfortunately,} its performance may  {be not satisfactory,} especially when the sample size is small while $\mu$ is relatively large, as the estimation of $p_X(0)$ becomes even more crucial. 

{Nevertheless, for $k>0$, as $T^{ (k)}=T^{ (0)}(1+\ldots+{{\mu^k}\over{ k!}})$, the natural estimator of $T^{ (k)}$, given by  $\widehat T^{ (0)}(1+\ldots+{{{\overline X_n} ^k}\over{ k!}})$, suffers from the same drawbacks of $\widehat T^{ (0)}$. To avoid the estimation of $p_X(0)$, since $p_X(0)(1+\ldots+{\frac{\mu^k}{ k!}})$ is equal to $P(X\leq k)$ under $H_0$, the following estimator of $T^{(k)}$ is proposed}
$$\widehat T_n^{ (k)}=f_k(\overline X_n)-F_n(k),$$
where 
\begin{equation*}
	F_n(k)=\frac{I{(X_1\leq k)}+\ldots+I{(X_n\leq k)}}{n}.
\end{equation*}

{It is worth noting that, since $k$ is a fixed natural number (often $k=0$), $\sqrt{n}\widehat T_n^{(k)}$ is the $k$-th r.v. of the estimated (discrete) empirical process introduced by \cite{henze1996empirical} for dealing with goodness-of-fit tests
	for discrete distributions and also considered by \cite{Gurtler} in their critical synopsis of several procedures for assessing Poissonity.
	However, as $\sqrt{n}\widehat T_n^{ (k)}$ is a r.v., its}  asymptotic distribution can be easily derived from classical Central Limit Theorems {under very mild assumptions}, as shown in the following Proposition.

\begin{proposition}
	\label{prop:conv}
	Let $X$ be a r.v. with
	${\rm Var}[X]$ finite and $k$ be a fixed natural number.  Let 
	\begin{equation*}
		V^{ (k)}_n=\sqrt{n}\, \widehat T_n^{ (k)}.
	\end{equation*}
	If $X\in \Pi_\mu$ then $V_n^{ (k)}$ converges in distribution to ${\mathcal N}(0,\sigma^2_{\mu,k})$ as $n\to \infty$, where 
	\begin{equation}\label{var}
		\sigma^2_{\mu,k}={\rm Var}\Big[e^{-\mu}{\frac{\mu^{k}}{ k!}}X+I(X\leq k)\Big]=f_k(\mu)\{1-f_k(\mu)\}-{\frac{e^{-2\mu}\mu^{2k+1}}{(k!)^2}}.
	\end{equation}
	Moreover, if $r_k=P(X\leq k)-f_k(\mu)\neq 0$ for some $k,$ namely $X$ is not a Poisson r.v., then $\vert V_n^{ (k)}\vert $ converges in probability to $\infty$.
\end{proposition}
\begin{proof}
	Note that $f^\prime_k(\mu)=-e^{-\mu}\frac{\mu^k}{k!}.$ Owing to the Delta Method 
	\begin{equation*}
		\begin{aligned}
			V_n^{ (k)} &=\sqrt n\{f_k(\overline X_n)-f_k(\mu)+f_k(\mu)-F_n(k)\}\cr 
			&=-\sqrt n\{e^{-\mu}{\frac{\mu^{k}}{ k!}}(\overline X_n-\mu)+F_n(k)-f_k(\mu)\}+o_P(1)\\
			&={\frac{g(X_1)+\ldots+g(X_n)}{\sqrt n}}-\sqrt n r_k+o_P(1),
		\end{aligned}
	\end{equation*}
	where $g$ is the function defined by $$x\mapsto -e^{-\mu}{\frac{\mu^{k}}{ k!}}(x-\mu)-\{I(x\leq k)-P(X\leq k)\}.$$
	When $X\in\Pi_\mu$, $r_k=0$, ${\rm E}[g(X)]=0$ and ${\rm E}[o_P^2(1)]$ is $o(1)$ since we have $$\vert f_k(\overline X_n)-f_k(\mu)\vert\leq \vert\overline X_n-\mu\vert.$$ Then, under $H_0$, {$V_n^{ (k)}$ converges in distribution to ${\mathcal N}(0,{ {\rm Var}\, [g(X)]})$ owing to the Central Limit Theorem.}  Moreover, since
	\begin{equation*}
		{\rm Var}\, [g(X)]={\frac{e^{-2\mu}\mu^{2k+1}}{(k!)^2}}+f_k(\mu)\{1-f_k(\mu)\}+2e^{-\mu}{\frac{\mu^{k}}{ k!}}{\rm Cov}\, \big[X,I(X\leq k)\big]
	\end{equation*}
	and
	$${\rm Cov}\, \big[X,I(X\leq k)\big]=\sum_{j=1}^k \frac{e^{-\mu}\mu^{j}}{(j-1)!}-\mu f_k(\mu)=-e^{-\mu}{\frac{\mu^{k+1}}{ k!}}, $$ {relation \eqref{var}, and consequently the first part of the proposition, is proven.}
	
	Now, let $X$ be a r.v. such that $r_k\neq 0$, in particular $X$ is not a Poisson r.v.. Thus, $\vert V_n^{ (k)}\vert $ converges in probability to $\infty$ because $${\frac{g(X_1)+\ldots+g(X_n)}{\sqrt n}}+o_P(1)$$ is bounded in probability and $\sqrt n \vert r_k\vert $ converges to $\infty$. The second part of the proposition is so proven.	
	
\end{proof}
\medskip\noindent Thanks to Proposition \ref{prop:conv}, fixed a natural number $k\geq 0$ and under the null hypothesis $H_0$, $\sqrt n\, \widehat T_n^{ (k)}/\sigma_{\mu,k}$ converges to ${\mathcal N}(0,1)$. Therefore, an estimator of $\sigma_{\mu,k}$ is needed to define the test statistic.
As the plug-in estimator
$$\widetilde\sigma^2_{n,k}=e^{-2\overline X_n}\big\{\big(1+\ldots+{{\overline X_n^k}\over{ k!}}\big)\big(e^{\overline X_n}-1-\ldots-{{\overline X_n^k}\over{ k!}}\big)-{{\overline X_n^{2k+1}}\over{ (k!)^2}}\big\}$$
converges a.s. and in quadratic mean to $\sigma^2_{\mu,k}$, for any natural number $k$, the test statistic turns out to be 
\begin{equation*}
	Z_{n,k}=\frac{\sqrt n\, \widehat T_n^{ (k)}}{\widetilde\sigma_{n,k}}.
\end{equation*}
An $\alpha$-level large sample test rejects $H_0$ for realizations of the test statistic whose absolute values are greater than $z_{ 1-\alpha/2}$, where $z_{ 1-\alpha/2}$ denotes the ${ 1-\alpha/2}$-quantile of the standard normal distribution.

\begin{corollary} 
	\label{cor:H_1}
	Under $H_1$, for any natural number $k$ such that $P(X\leq k)-f_k(\mu)\neq 0$, $Z_{n,k}$ converges in probability to $\infty$. 
\end{corollary}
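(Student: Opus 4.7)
The plan is to reuse, essentially verbatim, the asymptotic expansion of $V_n^{(k)}$ derived in the proof of Proposition~2, and then to apply Slutsky's theorem once the denominator is controlled. The key observation is that the Delta-method step there did not exploit Poissonity: it relied only on $\mu={\rm E}[X]$, ${\rm Var}[X]<\infty$, and the uniform bound $|f_k'(\mu)|\leq 1$ (which holds for every $\mu>0$ since $e^{-\mu}\mu^k/k!$ is a Poisson mass). Consequently, under $H_1$ as well, one retains the identity
$$V_n^{(k)} \;=\; \frac{g(X_1)+\ldots+g(X_n)}{\sqrt n} \;-\; \sqrt n\, r_k \;+\; o_P(1),$$
with $g$ defined as in the proof of Proposition~2, ${\rm E}[g(X)]=0$ and ${\rm Var}[g(X)]<\infty$.

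First, I would argue that $|V_n^{(k)}|$ diverges in probability. The normalised sum is $O_P(1)$ by the classical CLT applied to the i.i.d.\ variables $g(X_i)$ (Chebyshev would already suffice), whereas the corollary's hypothesis $r_k\neq 0$ makes the deterministic term $\sqrt n\,r_k$ diverge; a bounded-in-probability perturbation cannot cancel this drift. Next, I would control the denominator through the strong law of large numbers: $\overline X_n\to \mu$ almost surely, and by continuous mapping the plug-in estimator $\widetilde\sigma^2_{n,k}$ converges a.s.\ to the constant $\sigma^2_{\mu,k}$ displayed in~\eqref{var}, evaluated at the true mean. A direct application of Slutsky's theorem then gives $|Z_{n,k}|=|V_n^{(k)}|/\widetilde\sigma_{n,k}\to\infty$ in probability.

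I do not anticipate a real obstacle, since the decomposition machinery is already in place from Proposition~2. The one point deserving a brief check is that the limit $\sigma^2_{\mu,k}$ of the denominator is strictly positive for every $\mu>0$, so that division is legitimate and does not accidentally produce an indeterminate form. This is immediate from the representation in~\eqref{var}: under $\Pi_\mu$ the expression equals the variance of the random variable $e^{-\mu}\frac{\mu^k}{k!}X+I(X\leq k)$, which fails to be almost surely constant for any $\mu>0$, so the limiting value is strictly positive and finite.
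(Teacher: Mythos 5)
Your argument is correct and coincides with the paper's: the divergence of $|V_n^{(k)}|$ is exactly the second part of Proposition~2 (whose decomposition, as you note, never used Poissonity), and the paper likewise concludes via the a.s.\ convergence of $\widetilde\sigma^2_{n,k}$ to $\sigma^2_{\mu,k}$. Your explicit check that $\sigma^2_{\mu,k}>0$ is a worthwhile detail the paper leaves implicit.
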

\begin{proof}
	As $\widetilde\sigma^2_{n,k}$ converges a.s. to $\sigma^2_{\mu,k}$, the proof immediately follows from the second part of Proposition \ref{prop:conv}.
\end{proof}
\medskip\noindent It is at once apparent that $Z_{n,k}$ actually constitutes a family of test statistics giving rise to consistent test for $k=0$ and for all the other values of $k$ for which there is  a discrepancy between the cumulative distribution of the Poisson and of $X$. {Among this family of test statistics, only $Z_{n,0}$  belongs to the family of the Poisson zero indexes \citep{weiss2019testing}. It is particularly attractive owing to its simplicity} but, {as already pointed out,} its finite-sample performance may deteriorate, especially if the sample size is small and $\mu$ is relatively large. Therefore, the selection of the parameter $k$ ensuring consistency and good discriminatory capability is crucial and a data-driven selection criterion is proposed.

\section{Data-driven choice of $k$}
An heuristic, relatively simple, criterion for choosing $k$ is based on the relative discrepancy measure $T_n^{ (k)}/f_k(\mu)$ which can be estimated by $\frac{\widetilde\sigma_{n,k}}{f_k(\overline X_n)\sqrt n}Z_{n,k}$. Recalling that, under $H_0$, $Z_{n,k}$ is approximately a standard normal r.v. also for moderate sample size, as 
$\frac{\widetilde\sigma_{n,k}}{f_k(\overline X_n)\sqrt n}$ converges a.s. to $0$ when $n\to\infty$, for any fixed $n$, $k$ may be selected in a such a way that  $\frac{\widetilde\sigma_{n,k}}{f_k(\overline X_n)\sqrt n}$ is not negligible. This choice should ensure both high power and an actual significance level close to the nominal one. To this purpose, note that the function $\mu\mapsto f_k(\mu)$ is decreasing for any $k$ and if $\mu\geq1$, it holds $f^2_k(\mu)/f_k(1)\leq f_k(\mu)$ and  $\sup_k 1/f_k(\mu)=1/f_0(1)=e$. Then $k$ can be selected as the smallest natural number such that $\frac{\widetilde\sigma_{n,k}}{f^2_k(\overline X_n)\sqrt n}$ is not greater than $e$ when $\overline X_n\geq 1$, that is 
$$k^*_n=\min\Big\{k\geq 0: I(\overline X_n\geq 1)\,\frac{\widetilde\sigma_{n,k}}{f^2_k(\overline X_n)\sqrt n}\leq e\Big\}. $$
Notwithstanding $k^*_n$ converges a.s. to $0$ since ${\widetilde\sigma_{n,k}}\leq 1/2$ from \eqref{var}, the convergence rate may be very slow for large values of $\mu$ in such a way that $k^*_n$ can be rather larger than $0$ even for large sample sizes. Finally, by considering the  test statistic corresponding to $k^*_n$
\begin{equation*}
	W_n=\frac{\sqrt n\, \widehat T_n^{ (k^*_n)}}{\widetilde\sigma_{n,k^*_n}},
\end{equation*}
its asymptotic behaviour can be obtained. {Obviously in this case $\sqrt n\, \widehat T_n^{ (k^*_n)}$ is no more a r.v. belonging to the estimated empirical process \citep{henze1996empirical}.}
\begin{corollary} 
	Under $H_0$, $W_n$ converges in distribution to ${\mathcal N}(0,1)$ as $n\to \infty$ and, under $H_1$, $W_n$ converges in probability to $\infty$. 
\end{corollary}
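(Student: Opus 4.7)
\medskip\noindent\textbf{Proof proposal for Corollary 2.}
The plan is to reduce the data-driven statistic $W_n$ to the fixed-index statistic $Z_{n,0}$ by showing that $k^*_n$ stabilizes at $0$ almost surely for large $n$, and then invoke Proposition 2 (for the limit law under $H_0$) and Corollary 1 (for the divergence under $H_1$).

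\medskip\noindent\emph{Step 1 (stabilization of $k^*_n$ at $0$).} I would first argue that $k^*_n\to 0$ a.s. under both hypotheses. By the strong law of large numbers, $\overline X_n\to\mu$ a.s., so $f_0(\overline X_n)=e^{-\overline X_n}\to e^{-\mu}>0$ a.s. The variance estimator $\widetilde\sigma_{n,k}$ is uniformly bounded (the identity after \eqref{var} gives $\widetilde\sigma^2_{n,k}\leq f_k(\overline X_n)(1-f_k(\overline X_n))\leq 1/4$, hence $\widetilde\sigma_{n,k}\leq 1/2$ for all $n,k$). Consequently, for $k=0$,
\[
\frac{\widetilde\sigma_{n,0}}{f_0^{\,2}(\overline X_n)\sqrt n}\;\leq\;\frac{1}{2\,f_0^{\,2}(\overline X_n)\sqrt n}\;\xrightarrow[n\to\infty]{\text{a.s.}}\;0,
\]
so eventually the condition defining $k^*_n$ is met at $k=0$, whether or not $I(\overline X_n\geq 1)$ equals $0$ or $1$. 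Since $k^*_n$ is $\mathbb{N}$-valued, $k^*_n\to 0$ a.s.\ forces $k^*_n=0$ for all sufficiently large $n$ on a probability-one set. On this event, $W_n=Z_{n,0}$ eventually.

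\medskip\noindent\emph{Step 2 (limit under $H_0$).} Granted Step 1, under $H_0$ the equality $W_n=Z_{n,0}$ holds eventually a.s., and Proposition 2 applied with $k=0$ gives $Z_{n,0}\Rightarrow\mathcal N(0,1)$. Slutsky-type reasoning (or the fact that a.s.\ eventual equality preserves convergence in distribution) yields $W_n\Rightarrow\mathcal N(0,1)$.

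\medskip\noindent\emph{Step 3 (divergence under $H_1$).} The one non-cosmetic point is to verify that Corollary 1 is applicable at $k=0$, i.e.\ that $r_0=p_X(0)-e^{-\mu}\neq 0$ for every $X\in\Delta$ that is not Poisson. For this I would invoke Proposition 1 with $k=0$: since $X\in\Delta$ the function $t\mapsto D(t,\mu)$ keeps a constant sign on $[0,1]$, and since $\Psi_X$ and $\Psi_X'$ are continuous on $[0,1]$ (power series with $\mathrm E[X]<\infty$), the non-Poisson character of $X$ makes $D(\cdot,\mu)\not\equiv 0$, so $\int_0^1|D(t,\mu)|\,dt>0$; the upper bound in \eqref{Eq 1} at $k=0$ then forces $|e^{-\mu}-p_X(0)|>0$. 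Corollary 1 therefore yields $Z_{n,0}\to\infty$ in probability, and combining with $W_n=Z_{n,0}$ eventually a.s.\ from Step 1 gives $W_n\to\infty$ in probability.

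\medskip\noindent\emph{Main obstacle.} Technically nothing is deep; the only place where one must be slightly careful is the non-vanishing of $r_0$ under $H_1$, which rests on the class-$\Delta$ sign condition together with continuity of $D(\cdot,\mu)$ on $[0,1]$. Once this is in hand, the corollary is a clean reduction to the fixed-$k$ statements already established.
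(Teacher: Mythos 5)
Your proof is correct and follows essentially the same route as the paper: the paper's own argument is the one-line observation that $k^*_n\to 0$ forces $W_n$ and $Z_{n,0}$ to share the same asymptotic behaviour, after which Proposition 2 and Corollary 1 apply. You merely make explicit two points the paper leaves implicit — the a.s.\ eventual equality $k^*_n=0$ via $\widetilde\sigma_{n,k}\leq 1/2$, and the fact (stated in the text after Proposition 1) that $T^{(0)}\neq 0$ for every non-Poisson member of $\Delta$, which is what makes Corollary 1 applicable at $k=0$ under $H_1$.
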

\begin{proof}
	Since $k^*_n$ converges to 0, $W_n$ and $Z_{n,0}$ have the same asymptotic behaviour and the proof immediately follows from Proposition \ref{prop:conv} and Corollary \ref{cor:H_1}.  
\end{proof}
It is worth noting that the selection of $k_n^*$ by means of the proposed data-driven criterion ensures consistency, maintaining the asymptotic normal distribution of the test statistic.

\section{Asymptotic behaviour under contiguous alternatives}

The asymptotic behaviour of the proposed test is investigated for detecting Poisson departures from {contiguous alternatives. More precisely,} given a positive number $\lambda$, for any $n\geq\lambda^2$, let {$ X^{(n)}$ be a mixture of r.v.s given by}
\begin{equation}
	\label{mix}
	X^{(n)}\sim I(A_n)X+I(A^c_n)Y
\end{equation}
where $X\in\Pi_\mu$, $Y$ is a positive random variable with ${\rm E}[Y]=\mu$ and ${\rm E}[Y^2]<\infty$,  $X,Y,I(A_n)$ are  independent and $P(A_n)=(1-\frac{\lambda}{\sqrt n})$. 
{Roughly speaking, $\lambda$ represents a parameter quantifying the discrepancy between the distribution of $X$ and the distribution of $X^{(n)}$. Obviously, for small $\lambda$ values detecting departures from Poisson is extremely difficult.}
Note that $X^{(n)}$ belongs to $\Delta$ if $Y$ belongs to $\Delta$ and converges to a Poisson r.v.. 
\begin{proposition} For any $n\geq\lambda^2$, given a random sample $X^{(n)}_1,\ldots, X^{(n)}_n$ from $X^{(n)}$, for a fixed natural number $k$, let
	\begin{equation*}
		U^{ (k)}_n=\sqrt{n}\,\big\{f_k\big(\overline X_n^{\prime}\big)-F^{\prime}_n(k)\big\}
	\end{equation*}
	where $\overline X_n^{\prime}$ and $F^{\prime}_n$ are the sample mean and the empirical cumulative distribution function. Then $U_n^{ (k)}$ converges in distribution to ${\mathcal N}(\tau_k,\sigma^2_{\mu,k})$ as $n\to \infty$, where 
	$$\tau_k=-\lambda \{P(Y\leq k)-f_k(\mu)\}.$$ In particular $U^{(k^*_n)}_n$, where $k^*_n$ is obtained by the data-driven criterion, is equivalent to  $U^{ (0)}_n$, which converges in distribution to ${\mathcal N}(\lambda\{e^{-\mu}-P(Y=0)\},e^{-2\mu}(e^{\mu}-1-\mu))$.
\end{proposition}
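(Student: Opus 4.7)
The plan is to mimic the proof of Proposition 2, replacing the classical Central Limit Theorem by a triangular-array version, since the law of $X^{(n)}$ now changes with $n$. I would begin by recording two identities that capture the effect of the contamination: $\mathrm{E}[X^{(n)}] = (1-\lambda/\sqrt{n})\mu + (\lambda/\sqrt{n})\mu = \mu$, so the mean is preserved, while
\[
P(X^{(n)} \le k) - f_k(\mu) = \frac{\lambda}{\sqrt{n}}\bigl\{P(Y \le k) - f_k(\mu)\bigr\},
\]
which will generate the limiting bias $\tau_k$. Next I would apply the Delta method to $f_k(\overline{X}'_n)$ exactly as in the proof of Proposition 2, using the second-order Taylor bound on $f_k$ together with $\sqrt{n}(\overline{X}'_n - \mu) = O_P(1)$, a consequence of the uniform bound $\mathrm{Var}[X^{(n)}] \le \mathrm{E}[X^2] + \mathrm{E}[Y^2]$. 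This yields
\[
U_n^{(k)} = -e^{-\mu}\frac{\mu^{k}}{k!}\sqrt{n}(\overline{X}'_n - \mu) - \sqrt{n}\bigl\{F'_n(k) - f_k(\mu)\bigr\} + o_P(1).
\]

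Setting $g(x) = -e^{-\mu}\mu^{k}/k!\cdot (x-\mu) - (I(x \le k) - f_k(\mu))$, the same fixed function used in Proposition 2, rewrites the above as $U_n^{(k)} = n^{-1/2}\sum_{i=1}^{n} g(X^{(n)}_i) + o_P(1)$. A short computation using the two identities above gives $\sqrt{n}\,\mathrm{E}[g(X^{(n)})] = -\lambda\{P(Y \le k) - f_k(\mu)\} = \tau_k$. I would then apply the Lindeberg CLT for triangular arrays to the centred sum $n^{-1/2}\sum_{i=1}^{n}\{g(X^{(n)}_i) - \mathrm{E}[g(X^{(n)})]\}$. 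Pointwise $X^{(n)} \to X \sim \Pi_\mu$ in distribution (because $p_{X^{(n)}}(j) \to p_X(j)$ for every $j$), while the uniform second-moment bound makes $\{g(X^{(n)})^2\}$ uniformly integrable. Hence $\mathrm{Var}[g(X^{(n)})] \to \mathrm{Var}[g(X)] = \sigma^2_{\mu,k}$, where the right-hand side was already computed in Proposition 2, and Lindeberg's condition follows automatically. Slutsky's lemma then delivers $U_n^{(k)} \xrightarrow{d} \mathcal{N}(\tau_k, \sigma^2_{\mu,k})$.

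For the second claim, I would first verify that the data-driven selector $k^*_n$ still collapses to $0$ under the mixture. The triangular-array law of large numbers gives $\overline{X}'_n \to \mu$ in probability, hence $f_0(\overline{X}'_n) \to e^{-\mu}>0$, and since $\widetilde{\sigma}_{n,k}$ remains bounded by $1/2$, the defining inequality is eventually satisfied at $k=0$, so $P(k^*_n = 0) \to 1$. Consequently $U_n^{(k^*_n)}$ coincides with $U_n^{(0)}$ on an event of probability tending to $1$, and the general limit specialises with $\tau_0 = \lambda\{e^{-\mu} - P(Y=0)\}$ and
\[
\sigma^2_{\mu,0} = e^{-\mu}(1-e^{-\mu}) - \mu e^{-2\mu} = e^{-2\mu}(e^{\mu} - 1 - \mu).
\]
I expect the main technical obstacle to be the triangular-array step: justifying both the $o_P(1)$ remainder of the Delta expansion and the convergence of $\mathrm{Var}[g(X^{(n)})]$ relies critically on the uniform second-moment control afforded by $\mathrm{E}[Y^2] < \infty$, without which the clean i.i.d.\ argument of Proposition 2 does not carry over.
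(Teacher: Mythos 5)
Your proof is correct and follows essentially the same route as the paper: linearize $f_k$ around $\mu$ via a second-order Taylor bound controlled by the uniform second-moment estimate, identify the drift $\tau_k$ from $\sqrt{n}\{P(X^{(n)}\leq k)-f_k(\mu)\}=\lambda\{P(Y\leq k)-f_k(\mu)\}$, and apply a triangular-array CLT to the linearized sum. You are somewhat more explicit than the paper on the Lindeberg/uniform-integrability step and on verifying that $k^*_n$ collapses to $0$ under the mixture, but these are elaborations of the same argument rather than a different approach.
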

\begin{proof}
	Note that $\vert f^{\prime\prime}_k\vert \leq 2$ for any $k$.  Owing to the Taylor's Theorem
	
	$$\vert \sqrt{n}\,\{f_k\big(\overline X_n^{\prime}\big)-f_k(\mu)\}+\sqrt ne^{-\mu}{\frac{\mu^{k}}{ k!}}\big(\overline X_n^{\prime}-\mu)\vert \leq \sqrt{n} (\overline X_n^{\prime}-\mu)^2$$
	Since $${\rm E}\big[\sqrt{n}(\overline X_n^{\prime}-\mu)^2\big]\leq \frac{\mu+{\rm Var}[Y]}{\sqrt n},$$
	it follows that $U^{ (k)}_n$ has the same asymptotic behaviour of $${\frac{g_n(X^{(n)}_1)+\ldots+g_n(X^{(n)}_n)}{\sqrt n}}-\sqrt n r_{n,k},$$ where $g_n$ is the function defined by $$x\mapsto -e^{-\mu}{\frac{\mu^{k}}{ k!}}(x-\mu)-\{I(x\leq k)-P(X^{(n)}\leq k)\}$$ and $r_{n,k}=P(X^{(n)}\leq k)-f_k(\mu)$.
	Since $\lim_n -\sqrt n r_{n,k}=\tau_k$, the thesis follows from the convergence in distribution of  ${\frac{g_n(X^{(n)}_1)+\ldots+g_n(X^{(n)}_n)}{\sqrt n}}$ to ${\mathcal N}(0,\sigma^2_{\mu,k}).$ 
\end{proof}
\medskip\noindent  The previous proposition can be considered a non-parametric version of classical asymptotic analysis under the so-called shrinking alternative. Moreover, the test statistic has a local asymptotic normal distribution which is useful to highlight its discriminatory capability under not trivial contiguous alternatives. In a parametric setting, by means of Le Cam lemmas \citep{Lecam}, it could be possible to derive the limiting power function and to build an efficiency measure for test statistics. Clearly, in a non-parametric functional setting, a closed form of the power function is not available and must be assessed by means of simulation studies.

\section{Simulation study}
The performance of the proposed test has been assessed by means of an extensive Monte Carlo simulation. First of all, fixed the nominal level $\alpha=0.05$, the significance level of the test is empirically evaluated, as the proportion of rejections of the null hypothesis, by independently generating $10000$ samples of size $n=50$ from Poisson distributions with $\mu$ varying from $1$ to $16$ by ${0.5}$. As early mentioned, the family of test statistics $Z_{n,k}$ depends on the parameter $k$ and therefore, for any $\mu$, the empirical significance level is computed for $k=0,1,2,3$ and reported in Figure \ref{fig:figure1}. Simulation results confirm that for large values of $\mu$ the empirical level is far from the nominal one even for a reasonably large sample size and that a data-driven procedure is needed to select $k$. Thus, the test statistic $W_n$ is considered, and its performance is compared to those of two tests having known asymptotic distributions: the test by \cite{Meintanis}, $MN_n$, also recommended by \cite{mijburgh2020overview} to achieve good power against a large variety of deviations from the Poisson distribution, and the Fisher index of dispersion, $ID_n$, which, owing to its simplicity, is often considered as a benchmark.
The explicit ready-to-implement test statistic $MN_n$ has a non-trivial expression and it is based on $\int_0^1D(t,\mu)t^adt,$ where $a$ is a suitable parameter. $MN_n$ is proven to have an asymptotic normal distribution. In the simulation, $a$ is set equal to $3$ as suggested when there is no prior information on the alternative model.
The Fisher index of dispersion test is performed as an asymptotic two-sided chi-square test and it is based on the extremely simple test statistic $ID_n={\sum_{i=1}^n(X_i-\overline{X}_n)^2}/{\overline{X}_n}.$

\begin{figure} 
	\centering
	\includegraphics[width=0.65\textwidth]{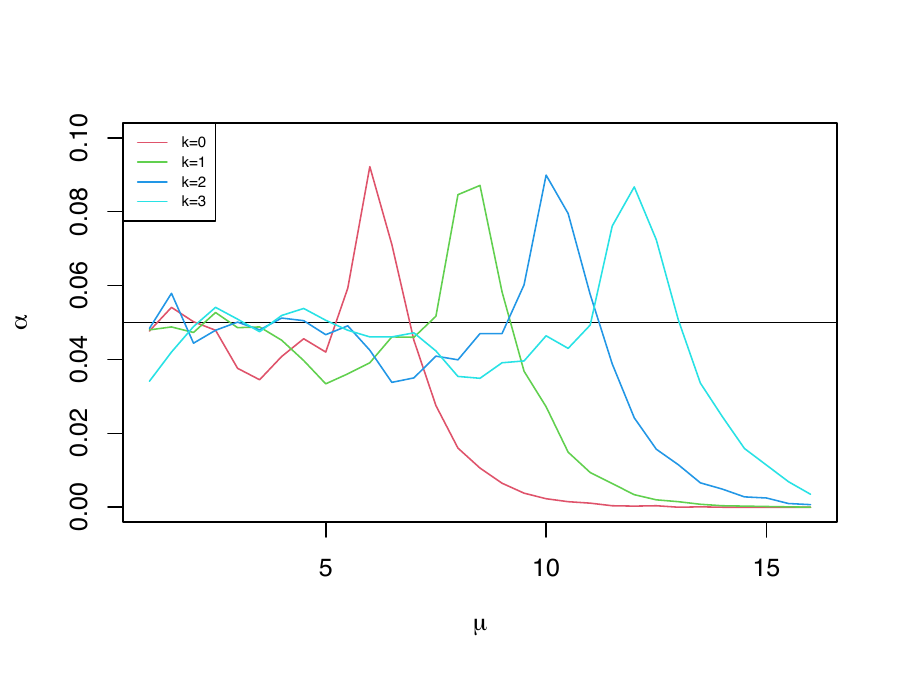}
	\caption{ {Proportion of rejections of the null hypothesis for the test based on $Z_{n,k}$} for $k=0,1,2,3$ and $n=50$ ($\alpha=0.05$).}
	\label{fig:figure1}
\end{figure}

Initially, given $\alpha=0.05$, the three tests are compared by means of their empirical significance level computed generating $10000$ samples of size $n=20, 50$ from Poisson distributions with $\mu$ varying from $1$ to $16$ by ${0.5}$. 
From Figure \ref{fig:figure2}, it is worth noting that, even for the moderate sample size $n = 20$, the test based on $W_n$ captures the nominal significance level satisfactory, highlighting a rather good speed of convergence to the normal distribution, also confirmed by the empirical level for $n=50$. The Fisher test shows an empirical significance level very close to the nominal one even for $n = 20$, except when $\mu$ is small.
The test based on $MN_n$, on the contrary, maintains the nominal level of significance rather closely only for $n=50$.

\begin{figure} 
	\centering
	\includegraphics[width=\textwidth]{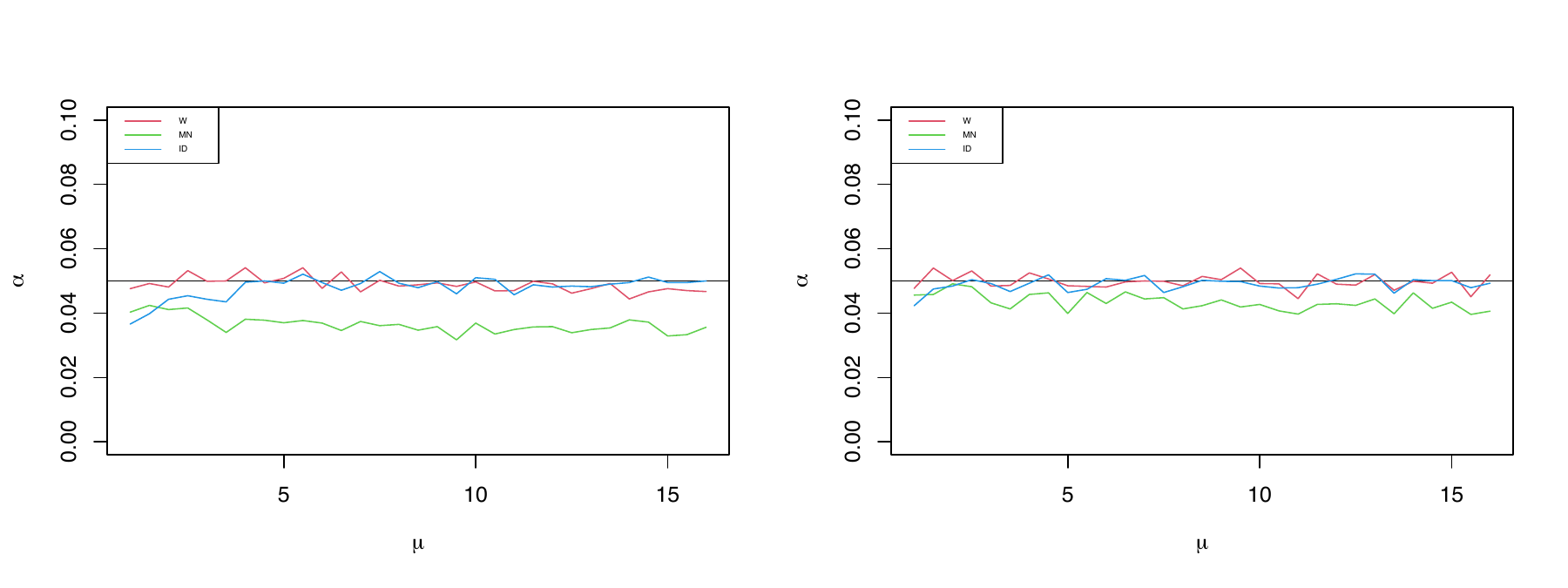}
	\caption{ {Proportion of rejections of the null hypothesis for the test based on $W_n$, $MN_n$ and $ID_n$} for $n=20$ (on the left) and $n=50$ (on the right).}
	\label{fig:figure2}
\end{figure}

The null hypothesis of Poissonity is tested against the following alternative models (for details see \citealp{Johnson}): {mixture of two Poisson} denoted by $\mathcal{MP}(\mu_{1},\mu_{2})$, {Binomial} by $\mathcal{B}(k, p)$, {Negative Binomial} by $\mathcal{NB}(k, p)$,	{Generalized Hermite} by $\mathcal{GH}(a, b, k)$, {Discrete Uniform} in $\{0,1,\dots,\nu\}$ by $\mathcal{DU}(\nu)$,  {Discrete Weibull} by  $\mathcal{DW}(q, \beta)$, {Logarithmic Series} translated by -1 by $\mathcal{LS^-}(\theta)$, {Logarithmic Series} by $\mathcal{LS}(\theta)$, {Generalized Poisson} denoted by $\mathcal{GP}(\mu_1,\mu_2)$, {Zero-inflated Binomial} denoted by $\mathcal{ZB}(k, p_{1}, p_{2})$, {Zero-inflated Negative Binomial} by $\mathcal{ZNB}(k, p_{1}, p_{2})$, {Zero-inflated Poisson} by $\mathcal{ZP}(\mu_{1},\mu_{2})$. Various parameters values are considered (see Table 1). Moreover, the significance level of the tests is reported for Poisson distributions with $\mu=0.5, 1, 2, 5, 10, 15$. The alternatives considered in the simulation study include overdispersed and underdispersed, heavy tails, mixtures and zero-inflated distributions together with distributions having mean close to variance. Some alternatives that do not belong to the class $\Delta$, such as the logarithmic and shifted-logarithmic with parameters 0.7, 0.8, and 0.9 {and the discrete uniform in $\{0,1,2,3\}$}, have been included to check the robustness of the $W_n$ and $MN_n$ tests.

From each distribution, $10000$ samples of size $n=20, 30, 50$ are independently generated and, on each sample, the three tests are performed. The empirical power of each test is computed as the percentage of rejections of the null hypothesis. The simulation is implemented by using \cite{Rcore} and in particular the packages \texttt{extraDistr}, \texttt{hermite} and  \texttt{RNGforGPD}. 

Simulation results are reported in Table \ref{tab: tabella1}. The $MN_n$ test is somewhat too conservative for smaller sample sizes and the $ID_n$ test does not capture the significance level for small $\mu$, while $W_n$ shows an empirical significance level rather close to the nominal one even for small sample size and small $\mu$.

\begin{table} 
	\centering
	\caption{Empirical power with $5\%$ nominal significance level.}
	\medskip
	\begin{scriptsize}
		\setlength{\tabcolsep}{1.8pt}
		\renewcommand\arraystretch{1.3}
		\begin{tabular}{lccccccccc} 
			\hline
			Model & $W_{20}$ & $MN_{20}$& $ID_{20}$ & $W_{30}$ & $MN_{30}$ & $ID_{30}$ &$W_{50}$ & $MN_{50}$& $ID_{50}$\\
			\hline
			$\Pi_{0.5}$  &    4.4     &       2.9     &       2.8     &       3.9     &       3.2     &       3.5     &       4.7     &       4.0     &       3.9 \\
			$\Pi_{1}$ &    5.5     &       4.5     &       4.2     &       4.6     &       4.2     &       4.2     &       4.6     &       4.8     &       4.5 \\
			$\Pi_{2}$ &    5.3     &       4.8     &       4.8     &       5.5     &       4.7     &       4.8     &       4.8     &       4.8     &       5.0 \\
			$\Pi_{5}$ &    5.4     &       3.7     &       5.2     &       5.6     &       4.3     &       4.8     &       5.2     &       4.6     &       5.0 \\
			$\Pi_{10}$ &    4.5     &       3.7     &       5.1     &       5.0     &       4.0     &       5.0     &       4.6     &       4.4     &       5.0 \\
			$\Pi_{15}$ &    4.3     &       3.6     &       5.1     &       4.7     &       4.0     &       4.8     &       4.9     &       4.3     &       5.1 \\
			$\mathcal{B}(1,0.5)$ &   58.6     &      41.2     &      25.9     &      81.3     &      70.3     &      70.3     &      99.2     &      98.3     &      96.6 \\
			$\mathcal{B}(4,0.25)$ &   11.6     &       9.1     &       6.8     &      14.6     &      13.2     &      12.3     &      21.9     &      22.8     &      21.3 \\
			$\mathcal{B}(30,0.1)$  & 5.9     &  3.7      &    4.8     & 5.8     &    4.5    &     5.4  &   6.4       &   5.7     &      6.7 \\
			$\mathcal{NB}(1,0.5)$ &   41.2     &      50.1     &      50.1     &      55.2     &      65.6     &      65.8     &      76.8     &      84.4     &      83.9 \\
			$\mathcal{NB}(4,0.75)$ &   11.2     &      15.2     &      16.6     &      14.1     &      20.5     &      22.2     &      21.1     &      29.6     &      32.2 \\
			$\mathcal{NB}(10,0.9)$ &    5.9     &       6.3     &       6.6     &       5.6     &       7.1     &       7.6     &       6.5     &       8.4     &       9.1 \\
			$\mathcal{GH}(1,1.25,2)$ &   18.6     &      50.2     &      43.5     &      27.0     &      65.2     &      58.4     &      43.7     &      84.1     &      78.8 \\
			$\mathcal{GH}(1,1.5,2)$ &   21.6     &      53.1     &      46.8     &      32.1     &      68.4     &      61.5     &      51.4     &      86.4     &      81.4 \\
			$\mathcal{GH}(1,1.75,2)$ &   22.3     &      54.9     &      48.5     &      35.1     &      70.1     &      64.0     &      58.9     &      87.8     &      83.8 \\
			$\mathcal{DU}(3)$ &    5.9     &       1.9     &       2.7     &       4.6     &       2.2     &       3.7     &       6.2     &       2.2     &       6.3 \\
			$\mathcal{DU}(5)$ &    6.8     &      18.7     &       4.9     &       9.4     &      26.8     &       6.2     &      15.6     &      41.9     &       8.9 \\
			$\mathcal{DU}(10)$     & 35.5    &  87.9     &    71.7    & 54.9    &    96.1   &    86.9  &   83.2      &   99.7    &      97.3 \\
			$\mathcal{DU}(15)$     & 56.6    &  98.1     &    95.3    & 78.9    &    99.8   &    99.1  &   97.4      &   100.0   &      100.0 \\
			$\mathcal{DW}(0.5,3)$ &   56.0     &      38.9     &      24.3     &      77.3     &      65.4     &      65.4     &      97.6     &      95.8     &      94.3 \\
			$\mathcal{DW}(0.8,5)$ &  100.0     &      99.7     &      99.0     &     100.0     &     100.0     &     100.0     &     100.0     &     100.0     &     100.0 \\
			$\mathcal{LS^-}(0.6)$  &   45.0     &      50.4     &      51.1     &      59.9     &      66.2     &      66.7     &      79.1     &      83.8     &      83.7 \\
			$\mathcal{LS^-}(0.7)$ &   63.3     &      70.0     &      70.1     &      79.0     &      84.9     &      85.0     &      93.9     &      96.3     &      96.0 \\
			$\mathcal{LS^-}(0.8)$ &   81.8     &      88.3     &      88.3     &      93.8     &      96.6     &      96.5     &      99.4     &      99.8     &      99.7 \\
			$\mathcal{LS^-}(0.9)$ &   94.9     &      98.7     &      98.6     &      99.2     &      99.9     &      99.9     &     100.0     &     100.0     &     100.0 \\
			$\mathcal{LS}(0.6)$  &   92.0     &      48.2     &      37.0     &      98.4     &      58.4     &      39.9     &     100.0     &      71.8     &      42.9 \\
			$\mathcal{LS}(0.7)$ &   79.8     &      25.4     &      32.1     &      92.0     &      27.1     &      36.0     &      99.1     &      29.8     &      42.5 \\
			$\mathcal{LS}(0.8)$ &   76.5     &      35.3     &      56.0     &      88.9     &      41.1     &      68.7     &      97.9     &      51.8     &      82.9 \\
			$\mathcal{LS}(0.9)$ &   91.7     &      83.2     &      91.5     &      97.8     &      92.9     &      97.4     &      99.9     &      98.9     &      99.8 \\
			$\mathcal{GP}(1,0.1)$  &    8.6     &      11.1     &      11.6     &       9.4     &      13.4     &      14.6     &      13.6     &      18.9     &      20.7 \\
			$\mathcal{GP}(3,0.25)$  &   21.9     &      42.8     &      45.6     &      31.7     &      56.7     &      59.6     &      48.2     &      75.8     &      79.3 \\
			$\mathcal{GP}(5,0.4)$  &   51.8     &      83.8     &      85.5     &      69.6     &      94.2     &      95.3     &      90.6     &      99.5     &      99.6 \\
			$\mathcal{ZB}(5,0.9,0.2)$ &   21.5     &      49.6     &      14.6     &      43.5     &      57.7     &      13.6     &      75.4     &      72.4     &      13.2 \\
			$\mathcal{ZB}(5,0.5,0.3)$ &   44.5     &      27.3     &       9.5     &      65.1     &      39.8     &      12.2     &      90.8     &      60.7     &      19.6 \\
			$\mathcal{ZB}(5,0.4,0.5)$ &   67.5     &      53.1     &      36.3     &      85.7     &      74.4     &      53.9     &      98.2     &      94.1     &      78.1 \\
			$\mathcal{ZNB}(5,0.9,0.1)$ &    6.9     &       7.3     &       7.7     &       8.3     &       9.7     &      10.1     &      10.6     &      12.8     &      13.4 \\
			$\mathcal{ZNB}(5,0.5,0.3)$ &   68.8     &      99.6     &      98.5     &      87.9     &     100.0     &      99.9     &      98.5     &     100.0     &     100.0 \\
			$\mathcal{ZNB}(10,0.4,0.5)$ &   97.5     &     100.0     &     100.0     &      99.9     &     100.0     &     100.0     &     100.0     &     100.0     &     100.0 \\
			$\mathcal{ZP}(1,0.2)$ &    8.5     &       8.6     &       8.1     &      10.9     &      11.9     &      11.5     &      17.1     &      16.9     &      14.0 \\
			$\mathcal{ZP}(1.5,0.3)$ &   28.2     &      26.6     &      22.0     &      39.6     &      38.6     &      31.8     &      62.5     &      59.4     &      48.5 \\
			$\mathcal{ZP}(2,0.5)$ &   77.8     &      74.4     &      65.1     &      91.9     &      90.4     &      83.6     &      99.4     &      98.9     &      96.8 \\
			\hline
		\end{tabular}  
	\end{scriptsize} 
	\label{tab: tabella1}
\end{table}

As expected, also from the theoretical results by \cite{janssen2000global}, none of the three tests shows performance superior to the others for any alternative and for any sample size, and their power crucially depends on the set of parameters also for alternatives in the same class. 
Obviously, when the alternative model is very similar to a Poisson r.v., e.g. when the alternative is Binomial with $k$ large and $p$ small, or when dealing with the Poisson Mixtures or the Negative Binomial with $k$ large, the power of all the tests predictably decreases. Low power is also observed against slightly overdispersed or underdispersed discrete uniform distributions, while the power rapidly increases as overdispersion becomes more marked, with the performance of all three tests becoming comparable as $n$ increases. 
For the Weibull distributions, the $W_n$ test has a certain edge over its competitors, which, on the other hand, perform better when the generalized Poisson distributions are considered, even though their power is satisfactory only for ${\mathcal GP}(5,0.4)$. 
The power of the test based on $W_n$ is the highest for all the logarithmic distributions, with less remarkable differences for $\theta=0.9$, while the three tests exhibit nearly the same power for the shifted log-normal distribution, where a decrease in the power of $W_n$ occurs especially for $n=20$.
As to the zero-inflated distributions, the three tests have a really unsatisfactory behaviour for $\mathcal{ZNB}(5, 0.9, 0.1)$ and $\mathcal{ZP}(1,0.2)$ also for $n=50$, but $W_n$ shows the best performance for most of the remaining alternatives and sample sizes. 
Overall, the number of alternatives for which the three tests reach a power greater than 90\% is 
almost the same for $n=20$ and $n=30$. Interestingly, for $n=50$ the proposed test reaches a power greater than 90\% more frequently not only than the straightforward Fisher test but also than the Meintanis test, which is more complex to be implemented. 

Finally, the discriminatory capability of the tests under contiguous alternatives is evaluated. In particular, the tests based on $W_n$, $MN_n$ and $ID_n$ are considered and, for sake of brevity, let $P_n$ be the power function corresponding to each test statistic. Obviously $P_n$ is a function of $\lambda$, where $\lambda\in \,\, ] 0,\sqrt{n}[$, which ensures that the contiguous mixture never completely degenerates, keeping its mixture nature for any $\lambda$.  Hence a basic efficiency measure is the following $$r_n=\frac{1}{\sqrt{n}}\int_{0}^{\sqrt{n}}P_n(\lambda)d\lambda,$$
 evidently $r_n \in ]0,1[$, and since $P_n$ is not known, the Monte Carlo estimate
$${\widehat r}_n=\frac{1}{m}\sum_{i=1}^{m}\widehat{P}_n(\lambda_i)$$ is considered, where {$\lambda_{i}=i\varepsilon$, with $i=1,\ldots,m$ and $m\leq\lfloor \frac{\sqrt{n}}{\varepsilon}\rfloor-1$,} for $\varepsilon$ sufficiently small, and $\widehat{P}_n$ is the empirical power.

To assess the performance of the three tests fairly, the alternative distributions of type \eqref{mix} are obtained by selecting $Y$ such that the tests achieve similar power when $Y$ is the alternative distribution. In particular, $Y$ is $\mathcal{B}(1, 0.5)$ and $X$ is $\Pi_{0.5}$.
In this case, it should be noted that the behaviour of $W_n$ coincides with that of the simpler version $Z_{n,0}$ since $k^*_n=0$ almost surely.
In Figure \ref{fig:figure3}, the empirical power as a function of $\lambda$, computed on 10000 independently generated samples, is reported for both $n=20$ and $n=50$ sample sizes and for $\varepsilon={0.25}$ and in Table \ref{tab: tabella2} the corresponding values of ${\widehat r}_n$ are reported.

\begin{figure}
	\centering
	\includegraphics[width=\textwidth]{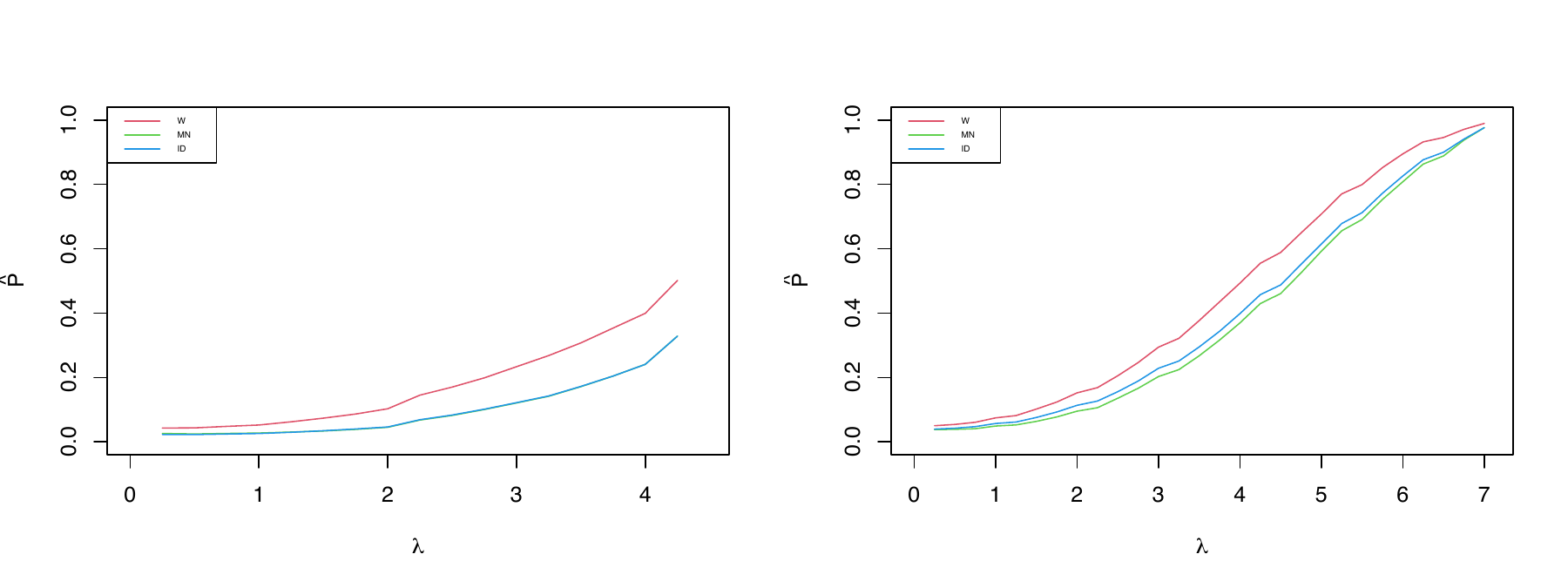}
	\caption{Empirical power {of $W_n$, $MN_n$ and $ID_n$} against shrinking alternative for $n=20$ (on the left) and $n=50$ (on the right).}
	\label{fig:figure3}
\end{figure} 

\begin{table} 
	\centering
	\caption{${\widehat r}_n$ with $n=20$ and $n=50$.}
	\medskip
      	\begin{tabular}{cccc}
			\hline
			& \multicolumn{1}{c}{$W_n$}  & $MN_n$ & $ID_n$\\
			\hline
			$n=20$	&{0.182} & {0.101}  & {0.101}\\
			$n=50$	&{0.461} &  {0.387}  &{0.404}\\
			\hline
		\end{tabular}
	\label{tab: tabella2}
\end{table}
Graphical and numerical results show that, even if all the tests improve as $n$ increases, the proposed test performs better for both sample sizes. In contrast, the $ID_n$ and $MN_n$ tests have very similar behaviour.

{\section{Some applications in biodosimetry}
	Biodosimetry, the measurement of biological response to radiation, plays an important role in accurately reconstructing the dose of radiation received by an individual by using biological markers, such as chromosomal abnormalities caused by radiation. When radiation exposure occurs, the damage in DNA is randomly
	distributed between cells producing chromosome aberrations and the interest is the number of aberrations (generally dicentrics and/or rings) observed.
	The Poisson distribution is the most widely recognised and commonly used distribution for the number of recorded
	dicentrics or rings per cell  \citep{ainsbury2013comparison} even though, due to the complexity of radiation exposure cases, other distributions may be suitably applied. Indeed, in presence of
	partial body irradiation, heterogeneous exposures, and exposure to high Linear Energy Transfer radiations, the Poisson distribution does not fit properly and the distribution of the chromosome aberrations provides useful insight about the patient’s exposure. Therefore, when dealing with data coming from the framework of biodosimetry, a first necessary step consists of testing Poissonity. }

{Following \cite{PuigWei}, we test Poissonity on the following datasets:
	\begin{itemize}
		\item[-] Dataset 1: number of chromosome aberrations (dicentrics and rings) from a patient, exposed to radiation after the nuclear accident of Stamboliyski (Bulgaria) in 2011;
		\smallskip
		\item[-] Dataset 2: total number of dicentrics from a male exposed to high doses of radiation caused by the nuclear accident happened in Tokai-mura (Japan) in 1999;
		\smallskip
		\item[-] Dataset 3: total number of rings from a male exposed to high doses of radiation caused by the nuclear accident happened in Tokai-mura (Japan) in 1999;  
		\smallskip
		\item[-] Dataset 4: number of dicentrics observed from a healthy donor when exposed to 5 Gy of X rays;
		\smallskip
		\item[-] Dataset 5: number of dicentrics observed from a healthy donor when exposed to 7 Gy of X rays.
	\end{itemize}
	Data are reported in Table \ref{tab: data} and the values of the test statistic, together with the corresponding p-values, are given in Table \ref{tab: risultati}.
	The test suggests that there are not noticeable departures from the Poisson distribution for Dataset 1 and Dataset 2, while for Dataset 3 the result of the test is statistically significant at $5\%$ level. Finally, the p-values of the test for Dataset 4 and Dataset 5 reveal a strong evidence against the null hypothesis of Poisson distributed data.}

\begin{table} [h!]
	    \centering
        \caption{Frequency of the number of aberrations for Datasets 1-5.}
		\medskip   
		\begin{tabular}{cccccc} 
			\hline
{\# aberrations}
			& Dataset 1  & Dataset 2 & Dataset 3 & Dataset 4 & Dataset 5 \\
			\hline
			0& 117 & 19 & 107 & 3& 0\\
			
			1& 94 & 17 & 42 & 23&  4\\
			
			2& 51 & 50 & 23 & 58&  23\\
			
			3& 15 & 40 & 3& 38&  35\\
			
			4& 6 & 23& $-$ & 15&  35\\
			
			5& 0 & 16& $-$ & 10&  29\\
			
			6& 0 & 4& $-$ & 2&  10\\
			
			7& 1 & 4& $-$ & 1&  9\\
			
			8& $-$ & 0& $-$ & $-$&  4\\
			
			9& $-$ & 2& $-$ & $-$&  1\\
			\hline
		\end{tabular}    

	\label{tab: data}
\end{table}

\begin{table} [h!]
	\centering
    \caption{Values of $W_n$ and p-values (in parenthesis) for the datasets in Table  \ref{tab: data}.}
	\medskip
    \begin{tabular}{lcccccccc} 
			\hline
			Dataset 1 & & Dataset 2 & & Dataset 3 & & Dataset 4 & & Dataset 5 \\
			\hline			
			-1.5391& &1.5377& &-1.9705& &4.0627& &3.127\\
			(0.1237)& &(0.1241) & & (0.0488)& &(0.0000)& &(0.0018)\\
			\hline
		\end{tabular} 

	\label{tab: risultati}
\end{table}

\section{Discussion}
Notwithstanding many tests for Poissonity are in literature, 
{the proposed family of test statistics seems to be an appealing alternative in the absence of prior information regarding the type of deviation from Poissonity. In particular, the  statistics are rather simple and easily interpretable and the test implementation does not require intensive computational effort. Moreover,} the test is consistent against any fixed alternative when $k$ is equal to $0$ and when it is selected using the data-driven criterion, that is $k=k^*_n$. For $k=0$ the test statistic basically compares an estimator of $P(X = 0)$ assuming that $X$ is Poisson with the relative frequency of $0$ but the finite sample performance of the test may not be satisfactory, especially for small sample size and relatively large Poisson parameter. The performance improves for $k^*_n$, when the test juxtaposes the plug-in estimator of the cumulative distribution function of a Poisson r.v. and the empirical cumulative distribution function in $k^*_n$. Indeed, even if $k^*_n$ converges a.s. to $0$, the convergence rate may be very slow for large values of the Poisson parameter, and thus, even for large sample sizes, $k^*_n$ can be rather larger than $0$.
{Finally, the simulation study shows that, with respect to the test by \cite{Meintanis} and that based on the Fisher index of dispersion, the test based on $k^*_n$ offers a rather satisfactory protection against a range of alternatives.}

\bibliographystyle{apalike}      
\bibliography{Riferimentibib}

\end{document}